\newtheorem{theo+}              {Theorem}           [section]
\newtheorem{prop+}  [theo+]     {Proposition}
\newtheorem{coro+}  [theo+]     {Corollary}
\newtheorem{lemm+}  [theo+]     {Lemma}
\newtheorem{exam+}  [theo+]     {Example}
\newtheorem{rema+}  [theo+]     {Remark}
\newtheorem{defi+}  [theo+]     {Definition}
\newenvironment{theorem}{\begin{theo+}}{\end{theo+}}
\newenvironment{proposition}{\begin{prop+}}{\end{prop+}}
\newenvironment{corollary}{\begin{coro+}}{\end{coro+}}
\theoremstyle{plain} \theoremstyle{remark}
\newtheorem{remark}{Remark}
\newtheorem*{ack}{\bf Acknowledgments}
\def \r{\mbox{${\mathbb R}$}}
\def\E{/\kern-1.0em \equiv }
\def\h{harmonic morphism}
\title{Constant mean curvature and totally umbilical
biharmonic surfaces in 3-dimensional geometries}
\author{Ye-Lin Ou$^{*}$ and Ze-Ping Wang$^{**}$}
\address{Department of
Mathematics,\newline\indent Texas A $\&$ M University-Commerce,
\newline\indent Commerce TX 75429,\newline\indent USA.\newline\indent
E-mail:yelin$\_$ou@tamu-commerce.edu \;(Ou)\newline \indent \vskip
0.1cm Department of Mathematics $\&$ Physics,\newline\indent Yunnan
Wenshan University,\newline\indent No. 66 Xuefu Road Wenshan County
Wenshan, Yunnan 663000,\newline\indent People's Republic of China
\newline\indent E-mail:zeping.wang@gmail.com \;(Wang) }
\thanks{* Supported by Texas A $\&$ M University-Commerce ``Faculty
Research Enhancement Project" (2010-11) .\\
\indent** Supported by Yunnan Wenshan University Research Project
09WSY03. The author is also grateful to the Department of
Mathematics, Texas A $\&$ M University-Commerce for the hospitality
he received during a visit in Fall 2009 during which a part of this
work was done.}
\begin{document}
\title[Biharmonic surfaces in 3-dimensional geometries] {Constant mean curvature and totally umbilical
biharmonic surfaces in 3-dimensional geometries}
\date {01/17/2011} \subjclass{58E20, 53C12, 53C42} \keywords{ Biharmonic surfaces, constant mean curvature, totally
umbilical surfaces, 3-dimensional geometries,
Bianchi-Cartan-Vranceanu spaces.} \maketitle

\section*{Abstract}
\begin{quote}
{\footnotesize We prove that a totally umbilical biharmonic surface
in any $3$-dimensional Riemannian manifold has constant mean
curvature. We use this to show that a totally umbilical surface in
Thurston's 3-dimensional geometries is proper biharmonic if and only
if it is a part of $S^2(1/\sqrt{2})$ in $S^3$. We also give complete
classifications of constant mean curvature proper biharmonic
surfaces in 3-dimensional geometries and in 3-dimensional
Bianchi-Cartan-Vranceanu spaces, and a complete classifications of
proper biharmonic Hopf cylinders in 3-dimensional
Bianchi-Cartan-Vranceanu spaces.}
\end{quote}
\section{Introduction and preliminaries}

We assume that all manifolds, maps, tensor fields and other objects studied in this paper are smooth.\\

 A map $\varphi:(M, g)\longrightarrow (N, h)$ between
Riemannian manifolds is  {\bf biharmonic map} if $\varphi|\Omega$ is
a critical point of the bienergy
\begin{equation}\nonumber
E^{2}\left(\varphi,\Omega \right)= \frac{1}{2} {\int}_{\Omega}
\left|\tau(\varphi) \right|^{2}{\rm d}x
\end{equation}
for every compact subset $\Omega$ of $M$, where $\tau(\varphi)={\rm
Trace}_{g}\nabla {\rm d} \varphi$ is the tension field of $\varphi$.
Locally, biharmonic maps are solutions of the following system of
4th order PDEs:
\begin{equation}\notag
{\rm
Trace}_{g}(\nabla^{\varphi}\nabla^{\varphi}-\nabla^{\varphi}_{\nabla^{M}})\tau(\varphi)
- {\rm Trace}_{g} R^{N}({\rm d}\varphi, \tau(\varphi)){\rm d}\varphi
=0,
\end{equation}
where $R^{N}$ denotes the curvature operator of $(N, h)$ defined by
$$R^{N}(X,Y)Z=
[\nabla^{N}_{X},\nabla^{N}_{Y}]Z-\nabla^{N}_{[X,Y]}Z.$$

As any harmonic map (the one with $\tau(\varphi)=0$) is biharmonic
we use the name {\bf proper biharmonic} for those
biharmonic maps which are not harmonic.\\

A submanifold is a {\bf biharmonic submanifold} if the isometric
immersion that defines the submanifold is a biharmonic map.
Biharmonic submanifolds include  minimal submanifolds as a subset as
it is well known that an isometric immersion is harmonic if and only
if it is minimal. We use {\bf proper biharmonic submanifolds} to
name
those biharmonic submanifolds which are not minimal.\\

Many recent works in the geometric study of biharmonic maps have
been focused on the following two fundamental problems: {\bf (1)
existence problem:} given two model spaces (e.g., some ``good"
spaces such as spaces of constant sectional curvature or more
general symmetric or homogeneous spaces), does there exist a proper
biharmonic map mapping one space into another? {\bf (2)
classification problem:} classify all proper biharmonic maps between
two model spaces where the existence is known. Some
typical and challenging classification problems are the following \\

{\bf Chen's conjecture \cite{CH}:} any biharmonic submanifold in a
Euclidean
space is minimal, and\\

{\bf The generalized Chen's conjecture}: any biharmonic submanifold
of $(N, h)$ with ${\rm Riem}^N\leq 0$ is minimal (see e.g.,
\cite{CMO1}, \cite{MO}, \cite{BMO1}, \cite{BMO2}, \cite{BMO3},
\cite{Ba1}, \cite{Ba2}, \cite{Ou1}, \cite{Ou2}, \cite{IIU}).\\

For some recent progress of classifications of biharmonic
submanifolds we refer the readers to \cite{CMO1}, \cite{CMO2},
\cite{BMO1}, \cite{BMO2}, \cite{BMO3}, \cite{MO}, and the
references therein.\\

In the recent paper \cite{Ou1}, the first named author of this paper
derived the equation for biharmonic hypersurfaces in a generic
Riemannian manifold which can be stated in the following
\begin{theorem}\cite{Ou1}\label{MTH}
Let $\varphi:M^{m}\longrightarrow N^{m+1}$ be an isometric immersion
of codimension-one with mean curvature vector $\eta=H\xi$. Then
$\varphi$ is biharmonic if and only if:
\begin{equation}\label{BHEq}
\begin{cases}
\Delta H-H |A|^{2}+H{\rm
Ric}^N(\xi,\xi)=0,\\
 2A\,({\rm grad}\,H) +\frac{m}{2} {\rm grad}\, H^2
-2\, H \,({\rm Ric}^N\,(\xi))^{\top}=0,
\end{cases}
\end{equation}
where ${\rm Ric}^N : T_qN\longrightarrow T_qN$ denotes the Ricci
operator of the ambient space defined by $\langle {\rm Ric}^N\, (Z),
W\rangle={\rm Ric}^N (Z, W)$ and  $A$ is the shape operator of the
hypersurface with respect to the unit normal vector $\xi$.
\end{theorem}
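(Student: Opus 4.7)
The plan is to write down the intrinsic biharmonic map equation for $\varphi$, substitute $\tau(\varphi)=mH\xi$, and then separate the resulting identity into tangent and normal components along $M$. The normal projection will yield the first equation of \eqref{BHEq} and the tangential projection the second.

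First, I would choose a local orthonormal frame $\{e_i\}_{i=1}^m$ on $M$ that is normal at the point $p$ of interest (so $(\nabla^M_{e_i}e_j)_p=0$), and work with the Gauss and Weingarten formulas $\nabla^N_X Y=\nabla^M_X Y+\langle A(X),Y\rangle\xi$ and $\nabla^N_X\xi=-A(X)$. Two applications of $\nabla^\varphi$ to $H\xi$, together with the symmetry of $A$, should give
$$\sum_i\nabla^\varphi_{e_i}\nabla^\varphi_{e_i}(H\xi)=(\Delta H-H|A|^2)\xi-2A({\rm grad}\,H)-H\,{\rm div}\,A,$$
where the $H|A|^2\xi$ term emerges from the normal part of $\nabla^N_{e_i}A(e_i)$ via Gauss. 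Multiplying by $m$ gives $\Delta^\varphi\tau(\varphi)$. For the curvature term, decomposing $mH\sum_i R^N(e_i,\xi)e_i$ via the standard symmetries of $R^N$ produces a normal component $-mH\,{\rm Ric}^N(\xi,\xi)\,\xi$ and a tangential component $-mH({\rm Ric}^N(\xi))^{\top}$.

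The remaining ingredient is to rewrite ${\rm div}\,A$. Using the symmetry of $A$ (so $\langle(\nabla_{e_i}A)e_i,Y\rangle=\langle(\nabla_{e_i}A)Y,e_i\rangle$), the Codazzi identity $(\nabla_Y A)X-(\nabla_X A)Y=R^N(X,Y)\xi$, and the relation ${\rm trace}\,A=mH$, a short computation gives
$${\rm div}\,A=m\,{\rm grad}\,H-({\rm Ric}^N(\xi))^{\top}.$$
Substituting this into the tangential part of the biharmonic equation and using ${\rm grad}(H^2)=2H\,{\rm grad}\,H$ reproduces, up to an overall sign, the second equation of \eqref{BHEq}, while the normal part yields $\Delta H-H|A|^2+H\,{\rm Ric}^N(\xi,\xi)=0$ directly.

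The delicate step is the Codazzi manipulation that collapses the tangential ${\rm div}\,A$ contribution into the clean combination of ${\rm grad}\,H$ and $({\rm Ric}^N(\xi))^{\top}$; without it the tangential equation would remain opaque. The other main source of care is keeping sign conventions consistent throughout — for the Laplacian, the curvature tensor, the Weingarten operator, and the Ricci operator — so that the two projections assemble into the precise form of the system \eqref{BHEq}.
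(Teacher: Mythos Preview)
The paper does not actually prove this theorem: it is quoted from \cite{Ou1} and used as a tool. Your outline is the standard derivation and is correct---computing $\nabla^\varphi\nabla^\varphi(mH\xi)$ via Gauss--Weingarten, identifying the curvature trace with $-{\rm Ric}^N(\xi,\xi)\xi-({\rm Ric}^N(\xi))^\top$, and using Codazzi to get ${\rm div}\,A=m\,{\rm grad}\,H-({\rm Ric}^N(\xi))^\top$; this is essentially the argument given in \cite{Ou1}.
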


An nice application of the above equation was made in \cite{OT}
where the equation was use to determine a conformally flat metric on
$\r^{5}$ so that a foliation by the hyperplanes defined by the
graphs of linear functions becomes a proper biharmonic foliation.
Those proper biharmonic hyperplanes were eventually used to
construct counter examples to prove that the generalized Chen's
conjecture is false (see \cite{OT} for details).\\

In this paper, we will use equation (\ref{BHEq}) to study biharmonic
surfaces in Thurston's 3-dimensional geometries. We first show that
a totally umbilical biharmonic surface in any $3$-dimensional
Riemannian manifold has constant mean curvature. We then use this to
show that the only totally umbilical proper biharmonic surface in
3-dimensional geometries is a part of $S^2(1/\sqrt{2})$ in $S^3$. We
also show that the only constant mean curvature proper biharmonic
surface in 3-dimensional geometries are a part of $S^2(1/\sqrt{2})$
in $S^3$ or a part of $S^1(1/\sqrt{2})\times \mathbb{R}$ in
$S^2\times \mathbb{R}$, and the only constant mean curvature proper
biharmonic surfaces in a 3-dimensional Bianchi-Cartan-Vranceanu
space is apart of $S^2(1/\sqrt{2})$ in $S^3$ or a part of a Hopf
cylinder in $S^2(1/(2\sqrt{m})\times \mathbb{R}$ or $SU(2)$ whose
base curve is a circle with radius $R=1/\sqrt{8m-l^2\:}$ in the base
sphere $S^2(\frac{1}{2\sqrt{m}})$ identified with
$\left(\mathbb{R}^{2},h=\frac{dx^2+dy^2}{[1+m(x^2+y^2)]^2}\right)$.

\section{Constant mean curvature biharmonic surfaces in 3-dimensional
geometries}

It is well known that Thurston's eight models for 3-dimensional
geometries consist of : 3-dimensional space forms $ \r^3, \; S^3,
H^3$, the product spaces: $S^2\times \r,\; H^2\times \r$ and
$\widetilde{SL}(2,\r)$, ${\rm Nil}$, \; $\rm Sol$.

It is also known (see, e.g., \cite{BDI}, \cite{CMOP}) that
Bianchi-Cartan-Vranceanu 3-dimensional spaces :
\begin{equation}\label{CV}
M^3_{m,l}=\left(\mathbb{R}^{3},g=\frac{dx^2+dy^2}{[1+m(x^2+y^2)]^2}+[dz+\frac{l}{2}\frac{y
dx-x dy}{1+m(x^2+y^2)}]^2\right)
\end{equation}

include six of Thurston's eight 3-dimensional geometries in the
family except for the
hyperbolic space $H^3$ and $\rm Sol$.\\

As biharmonic surfaces in 3-dimensional space forms have been
completely classified (\cite{Ji}, \cite{CI}, \cite{CMO1}), we can
obtain the classification of constant mean curvature biharmonic
surfaces in 3-dimensional geometries by classifying constant mean
curvature biharmonic surfaces in Bianchi-Cartan-Vranceanu
3-dimensional spaces and in $\rm
Sol$.\\

{\bf 2.1 Constant mean curvature biharmonic surfaces in
Bianchi-Cartan-Vranceanu 3-spaces}\\

We adopt the following notations and sign convention for Riemannian
curvature operator:
\begin{equation}\notag
 R(X,Y)Z=\nabla_{X}\nabla_{Y}Z
-\nabla_{Y}\nabla_{X}Z-\nabla_{[X,Y]}Z,
\end{equation}
and the Riemannian and the Ricci curvatures:
\begin{equation}\notag
\begin{array}{lll}
&&  R(X,Y,Z,W)=g( R(Z,W)Y,X),\\\notag && {\rm Ric}(X,Y)= {\rm
Trace}_{g}R=\sum_{i=1}^m R(Y, e_i, X, e_i)=\sum_{i=1}^m \langle R(
X,e_i) e_i, Y\rangle.
\end{array}
\end{equation}

For a Bianchi-Cartan-Vranceanu 3-space given in (\ref{CV}), one can
easily check  that the vector fields
\begin{equation}\notag
E_{1}=F\frac{\partial}{\partial
x}-\frac{ly}{2}\frac{\partial}{\partial z},\;E_{2}=F
\frac{\partial}{\partial y}+\frac{lx}{2}\frac{\partial}{\partial
z},\;E_{3}=\frac{\partial}{\partial z},
\end{equation}
where $F=1+m(x^2+y^2)$, form an orthonormal
frame.\\
A straightforward computation shows that (see also \cite{CMOP})
\begin{equation}\label{Lie}
[E_1,E_2]=2mxE_{2}-2myE_{1}+lE_{3},\;\; {\rm all\;\;
other}\;\;[E_i,E_j]=0,\;\;i,j=1, 2, 3.
\end{equation}

Let $\nabla$ denote the Levi-Civita connection of a 3-dimensional
Bianchi-Cartan-Vranceanu space, then we can check (see also
\cite{BDI} and \cite{CMOP}) that
\begin{equation}\label{CNil}
\begin{cases}
\nabla_{E_{1}}E_{1}=2myE_{2},\;\;\nabla_{E_{2}}E_{2}=2mxE_{1},\;\;\\
\nabla_{E_{1}}E_{2}=-2myE_{1}+\frac{l}{2}E_{3},\;\;
\nabla_{E_{2}}E_{1}=-2mxE_{2}-\frac{l}{2}E_{3},\;\;\\
\nabla_{E_{3}}E_{1}=\nabla_{E_{1}}E_{3}=-\frac{l}{2}E_{2},\;\;\nabla_{E_{3}}E_{2}=\nabla_{E_{2}}E_{3}=\frac{l}{2}E_{1},\;\;\\
\;\; all \;\; other\;\;
\nabla_{E_i}E_j=0,\;i,j=1, 2, 3.\;\;\\
\end{cases}
\end{equation}

A further computation (see also \cite{BDI} and \cite{CMOP}) gives
the possible nonzero components of the curvatures:
\begin{equation}\label{BCV1}
\begin{array}{lll}
 R_{1212}=g(R(E_{1},E_{2})E_{2},E_{1})=4m-\frac{3l^2}{4},\\
R_{1313}=g(R(E_{1},E_{3})E_{3},E_{1})=\frac{l^2}{4},\\
R_{2323}=g(R(E_{2},E_{3})E_{3},E_{2})=\frac{l^2}{4},
\end{array}
\end{equation}
and the Ricci curvature:
\begin{eqnarray}\label{RicNil}
&& {\rm Ric}\, (E_{1},E_{1})={\rm Ric}\,
(E_{2},E_{2})=4m-\frac{l^2}{2},\;\;\\\notag &&{\rm Ric}\,
(E_{3},E_{3})=\frac{l^2}{2},\\\notag && {\rm all\;other }\; {\rm
Ric}\, (E_i,E_j)=0,\;i\neq j.
\end{eqnarray}

\begin{theorem}\label{CVV}
A constant mean curvature surface in a $3$-dimensional
 Bianchi-Cartan-Vranceanu space is proper biharmonic if and only if it is a
part of: \\
$(1)$\;\; $S^2(1/\sqrt{2m})$ in $S^3(1/\sqrt{m})$, or \\
$(2)$\;\; $S^1((1/(2\sqrt{2m}))\times \mathbb{R}$ in
$S^2(1/(2\sqrt{m})\times \mathbb{R}$, or\\
$(3)$\;\; a Hopf cylinder in $SU(2)$ with $4m-l^2>0$ over a circle
of radius $R=1/\sqrt{8m-l^2\:}$ in the base sphere
$S^2(\frac{1}{2\sqrt{m}})$ identified with
$\left(\mathbb{R}^{2},h=\frac{dx^2+dy^2}{[1+m(x^2+y^2)]^2}\right)$.
\end{theorem}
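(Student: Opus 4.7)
The plan is to extract as much as possible from the biharmonic system (\ref{BHEq}) under the hypothesis that $H$ is constant. Since $\nabla H=0$, the second equation of (\ref{BHEq}) collapses to $H\,({\rm Ric}^{N}(\xi))^{\top}=0$; ``proper biharmonic'' forces $H\neq 0$, so the unit normal $\xi$ is a pointwise eigenvector of the ambient Ricci operator, and the first equation degenerates to the algebraic identity $|A|^{2}={\rm Ric}^{N}(\xi,\xi)$.

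By (\ref{RicNil}), the Ricci operator is diagonal in the frame $\{E_{1},E_{2},E_{3}\}$ with eigenvalues $4m-l^{2}/2$ (double, on ${\rm span}(E_{1},E_{2})$) and $l^{2}/2$ (on $E_{3}$). The eigenvector condition on $\xi$ splits the analysis into three regimes. (A) If $4m=l^{2}$, the Ricci operator is a scalar matrix and the sectional curvatures in (\ref{BCV1}) all coincide with $l^{2}/4=m$, so $M^{3}_{m,l}$ is the round sphere $S^{3}(1/\sqrt{m})$ and the conclusion is case (1) by the known classification of CMC proper biharmonic surfaces in $3$-spheres (\cite{Ji},\cite{CI},\cite{CMO1}). (B) If $\xi=\pm E_{3}$, the distribution ${\rm span}(E_{1},E_{2})$ is integrable, which by (\ref{Lie}) forces $l=0$; but then (\ref{CNil}) gives $\nabla_{E_{i}}E_{3}=0$ for $i=1,2$, so the integral surfaces are totally geodesic and cannot be proper biharmonic. (C) Otherwise $\xi\in{\rm span}(E_{1},E_{2})$ and $E_{3}$ is tangent to the surface, making it a Hopf cylinder $\pi^{-1}(\gamma)$ over a curve $\gamma$ in the base $(\mathbb{R}^{2},h)$ of constant Gauss curvature $4m$.

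Case (C) carries the main work. Writing $\xi=\cos\theta\,E_{1}+\sin\theta\,E_{2}$ and the second tangent direction $T=-\sin\theta\,E_{1}+\cos\theta\,E_{2}$, I would use (\ref{CNil}) to compute the shape operator in the frame $\{T,E_{3}\}$. Invariance of the cylinder under the flow of $E_{3}$ gives $E_{3}\theta=0$ and rigidifies the off-diagonal entries to $\langle AE_{3},T\rangle=l/2$ and $\langle AE_{3},E_{3}\rangle=0$; the remaining entry $\langle AT,T\rangle$ turns out, up to sign, to be the geodesic curvature $k$ of $\gamma$ in $(\mathbb{R}^{2},h)$. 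Thus $2H=k$ and $|A|^{2}=k^{2}+l^{2}/2$, so the identity $|A|^{2}={\rm Ric}^{N}(\xi,\xi)=4m-l^{2}/2$ reduces to $k^{2}=4m-l^{2}$. Constancy of $H$ then forces $\gamma$ to have constant geodesic curvature, hence to be a circle, and $H\neq 0$ forces the strict inequality $4m-l^{2}>0$ (ruling out $\widetilde{SL}(2,\mathbb{R})$, ${\rm Nil}$, and $H^{2}\times\mathbb{R}$). A short inversion converting constant geodesic curvature on the round $S^{2}(1/(2\sqrt{m}))$ into extrinsic radius gives $R=1/\sqrt{k^{2}+4m}=1/\sqrt{8m-l^{2}}$, producing case (3) when $l\neq 0$ and case (2) when $l=0$. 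The converse direction in each case is a routine substitution into (\ref{BHEq}).

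The main obstacle I anticipate is the shape-operator computation in case (C): one has to organise the covariant derivatives of the rotating frame $\{\xi,T,E_{3}\}$ carefully enough to see that $\langle AT,T\rangle$ collapses to the geodesic curvature of the projected curve and that $\langle AE_{3},T\rangle$ simplifies to exactly $l/2$ with no residual dependence on $\theta$ or on the base point. Once that identification is in hand, the algebraic equation $k^{2}=4m-l^{2}$ and the inversion from geodesic curvature on a round sphere to extrinsic radius are elementary.
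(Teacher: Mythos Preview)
Your proposal is correct and follows essentially the same route as the paper: reduce (\ref{BHEq}) under constant $H$ to the pair $|A|^{2}={\rm Ric}^{N}(\xi,\xi)$ and $({\rm Ric}^{N}(\xi))^{\top}=0$, split cases according to whether $4m=l^{2}$ and, if not, whether $\xi\perp E_{3}$ or $\xi=\pm E_{3}$, and in the Hopf-cylinder case compute $|A|^{2}=\kappa_{g}^{2}+l^{2}/2$ and $2H=\kappa_{g}$ to obtain $\kappa_{g}^{2}=4m-l^{2}>0$. Your phrasing of the second equation as ``$\xi$ is a Ricci eigenvector'' is a tidy way to arrive at the same dichotomy the paper reaches via the coordinate conditions $c^{3}a_{i}^{3}=0$; the only minor reorganisation is that you invoke the Hopf-cylinder structure (from $E_{3}$ tangent) first and read off $E_{3}\theta=0$ from invariance, whereas the paper derives $e_{2}(a)=e_{2}(b)=0$ from the symmetry of $h$ and then identifies the surface as a Hopf cylinder---same content, different order.
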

\begin{proof}
If the mean curvature $H$ is constant, then the biharmonic equation
reduces to
\begin{equation}\label{hn45}
\begin{cases}
-H |A|^{2}+H{\rm
Ric}^N(\xi,\xi)=0,\\
 \, H \,({\rm
Ric}\,(\xi))^{\top}=0.
\end{cases}
\end{equation}
Let $\{e_{i}=a_{i}^{\gamma} E_{\gamma},\;\;
\xi=c^{\gamma}E_{\gamma},\;\;i=1, 2\}$ be an  orthonormal frame on
the ambient space adapted to the surface with $\xi$ being the unit
normal vector field of the surface. Then, a straightforward
computation using (\ref{RicNil}) gives
\begin{eqnarray}\label{hn46}
({\rm Ric}\,(\xi))^{\top}&=&\sum_{i=1}^{2}{\rm
Ric}(c^{\gamma}E_{\gamma},a^{\gamma}_iE_{\gamma})e_i\\\notag &
=&\sum_{i=1}^{2}
\left[(4m-\frac{l^2}{2})\sum_{\gamma=1}^{2}c^{\gamma}a_i^{\gamma}+\frac{l^2}{2}c^{3}a_i^{3}\right]e_i\\\notag
&=&(l^2-4m)c^{3}\sum_{i=1}^{2}a_i^{3}e_i,\\\label{hn47}
 {\rm Ric}^N(\xi,\xi)&=&{\rm Ric}^N(c^{\gamma}E_{\gamma},c^{\gamma}E_{\gamma})=\sum_{i=1}^{2}(4m-\frac{l^2}{2})(c^{i})^2+\frac{l^2}{2}(c^{3})^2
 \\\notag& =&(4m-\frac{l^2}{2})+(l^2-4m)(c^{3})^2.
\end{eqnarray}
Substituting (\ref{hn46}) and (\ref{hn47}) into (\ref{hn45}) we
conclude that the constant mean curvature surface is biharmonic if
and only if
\begin{equation}\notag
\begin{cases}
-H [|A|^{2}-(4m-\frac{l^2}{2})-(l^2-4m)(c^{3})^2]=0,\\
(l^2-4m)c^{3}a^{3}_{1}H=0,\\
(l^2-4m)c^{3}a^{3}_{2}H=0,
\end{cases}
\end{equation}
which has solution $H=0$ meaning that the surface is minimal, or
\begin{equation}\label{Chn}
\begin{cases}
|A|^{2}-(4m-\frac{l^2}{2})-(l^2-4m)(c^{3})^2=0,\\
(l^2-4m)c^{3}a^{3}_{1}=0,\\
(l^2-4m)c^{3}a^{3}_{2}=0.
\end{cases}
\end{equation}
We can solve Equation (\ref{Chn}) by considering the following
cases:\\

{\bf Case I:} $l^2-4m=0$. In this case, $|A|^2=\frac{l^2}{2}$, and
the corresponding Bianchi-Cartan-Vranceanu 3-space is locally either
$\r^3$ or $\mathbb{S}^3(1/\sqrt{m})$ and, by the classification of
biharmonic surfaces in $3$-dimensional space form (see \cite{Ji},
\cite{CI}, \cite{CMO1}), we conclude that in these cases, the only
proper
biharmonic surface is a part of $S^2(1/\sqrt{2m})$ in $S^3(1/\sqrt{m})$.\\

{\bf Case II:} $l^2-4m\ne 0$. In this case, by the last two
equations
of (\ref{Chn}),  we have either $c^3=0$ or  $c^3\ne 0$.\\

For Case II-A: $c^3=0$, we use the first equation of (\ref{Chn}) to
conclude that
\begin{equation}\label{TB}
|A|^2=4m-\frac{l^2}{2}.
\end{equation}

Noting that $c^3=0$ means that the normal vector field of the
surface $\Sigma$ is always orthogonal to
$E_3=\frac{\partial}{\partial z}$ so we can take an another
orthonormal frame $\{e_{1}=aE_{1}+bE_2,\;\;e_2=E_3\;\;
\xi=bE_{1}-aE_2,\}$ adapted to the surface with $a^2+b^2=1$ and
$\xi$ being the unit normal vector filed. Using (\ref{CNil}) we can
compute (see also \cite{Ve} Example 3.4.1)
\begin{equation}\label{cbu1}
\begin{array}{lll}
\nabla_{e_{1}}\xi&=&\{a e_{1}(b)-b e_{1}(a)+2
m(ay-bx)\}e_{1}-\frac{l}{2}e_{2},\\
\nabla_{e_{2}}\xi&=&\{a e_{2}(b)-b e_{2}(a)-\frac{l}{2}\}e_{1}.
\end{array}
\end{equation}
A further computation gives the second fundamental form of the
surface with respect to the chosen adapted orthonormal frame:
 \begin{equation}\label{cbu2}
\begin{array}{lll}
h(e_1,e_1)=-\langle\nabla_{e_{1}}\xi,e_{1}\rangle=-(a
e_{1}(b)-b e_{1}(a)+2 m(ay-bx)),\\
h(e_1,e_2)=-\langle\nabla_{e_{1}}\xi,e_{2}\rangle=\frac{l}{2},\\h(e_2,e_1)
=-\langle\nabla_{e_{2}}\xi,e_{1}\rangle=\frac{l}{2}-a
e_{2}(b)+b e_{2}(a),\\
h(e_2,e_2)=0,
\end{array}
\end{equation}
It follows from (\ref{cbu2}), the symmetry $h(e_1,e_2)=h(e_2,e_1)$,
and $0=e_{2}(a^2+b^2)=2ae_{2}(a)+2be_{2}(b)$ that
\begin{equation}\label{cbu3}
\begin{array}{lll}
e_{2}(a)=e_{2}(b)=0,
\end{array}
\end{equation}
which means the functions $a, b$ are constant along the fibers of
the Riemannian submersion
\begin{eqnarray}\label{RSM}
\pi:
M^3_{m,l}\longrightarrow\left(\mathbb{R}^{2},h=\frac{dx^2+dy^2}{[1+m(x^2+y^2)]^2}\right),\;\;\;
\pi(x,y,z)=(x, y).
\end{eqnarray}

On the other hand, it is not difficult to see that the integral
curves of $e_2$ are geodesics on the surface $\Sigma$. It follows
from a well-known fact in the differential geometry of surfaces that
we can parametrize $\Sigma$  by $r=r(u,v)$ so that the $u-curves$
are the integral curves of $e_2$ and the $v-curves$ are the
orthogonal trajectories of the $u-curves$. Let
$\gamma:I\longrightarrow M^3_{m,l}$,\; $\gamma=\gamma (s)$ be a
$v-curve$ on the surface with arclength parameter, then it is
horizontal with respect to the Riemannian submersion $\pi$. Let
$\alpha(s)=\pi(\gamma(s))$ be the curve in the base space of the
Riemannian submersion, then the surface $\Sigma$ can be viewed as
$\cup_{s\in I}\pi^{-1}(\alpha(s))$, a Hopf cylinder over the curve
$\alpha(s) \;\subset\;
\left(\mathbb{R}^{2},h=\frac{dx^2+dy^2}{[1+m(x^2+y^2)]^2}\right)$.\\

If we write $\alpha(s)=(x(s), y(s))$, then the surface $\Sigma$ can
be parametrized as $r(s,t)=(x(s),y(s), t)$ since the fiber  of $\pi$
over a point $(x_0,y_0)$ is $\pi^{-1}(x_0,y_0)=\{(x_0,y_0, t)|t\in
\r\}$.

Let $\kappa_g$ denote the geodesic curvature of the base curve.
Then, we can use the Frenet formula to check (see also \cite{Ve},
Example 3.4.1) that $\kappa_g=-(a e_{1}(b)-b e_{1}(a)+2 m(ay-bx))$.
It follows from Equations (\ref{cbu2}) and (\ref{cbu3}) that
\begin{equation}\label{q1}
\begin{array}{lll}
|A|^2=\kappa_g^2+l^2/2,\\
H=\kappa_g/2.
\end{array}
\end{equation}
Combining (\ref{q1}) and (\ref{TB}) we have
\begin{equation}\label{q2}
\begin{array}{lll}
\kappa_g^2=4m-l^2>0,\\
H^2=(4m-l^2)/4>0.
\end{array}
\end{equation}

To have a geometric characterization of the base curve we first
notice that the proper biharmonicity ($H^2> 0$) of the Hopf cylinder
implies that $m>0$. It follows that the potential BCV space has to
be either $S^2(1/(2\sqrt{m})\times \mathbb{R}$ or $SU(2)$ with
$m>0$. Using the well-known curvature relation (the O'Neill's
formula) of the Riemannian submersion (\ref{RSM}) we conclude that
the base space must have positive curvature $4m$ so it can be viewed
as a sphere $S^2(\frac{1}{2\sqrt{m}})$. As the curve in the base
sphere $S^2(\frac{1}{2\sqrt{m}})$ has constant geodesic curvature
$\kappa_g=\sqrt{4m-l^2\;}$ one can check that this curve, viewed as
a curve in Euclidean 3-space of which $S^2(\frac{1}{2\sqrt{m}})$ is
a subset, has curvature
$\kappa=\sqrt{\kappa_n^2+\kappa_g^2\;}=\sqrt{8m-l^2}$ and torsion
$\tau=-\frac{2\sqrt{m}\;\kappa'}{\kappa\kappa_g}=0$. From this we
conclude that the base curve of the Hopf cylinder is a circle on
$S^2(\frac{1}{2\sqrt{m}})$ with radius $\frac{1}{\sqrt{8m-l^2}}$. In
particular, when $l=0$ we obtain the Hopf cylinder
$S^1((1/(2\sqrt{2m}))\times \mathbb{R}$ in $S^2(1/(2\sqrt{m})\times
\mathbb{R}$, and when $l=0, m=1/4$ the proper Hopf cylinder
$S^1(1/\sqrt{2})\times \mathbb{R}$ in $S^2\times
\mathbb{R}$ found in \cite{Ou1}.\\

For Case II-B: $c^3\ne 0$, we use the last two equations of
(\ref{Chn}) to conclude that $a^3_1= a_2^3=0$. It follows that ${\rm
Span}\{e_1,\,e_2\}={\rm Span}\{E_1,\,E_2\}$. This means the
distribution determined by $\{E_1, E_2\}$ is integrable and hence
(by Frobeniu's theorem) is involutive.  It follows from (\ref{Lie})
that $l=0$. It also follows that $\xi=\pm E_3$ and hence $c^3=\pm
1$. Substituting $l=0$ and $c^3=\pm 1$ into the first equation of
(\ref{Chn}) we obtain $|A|^2=0$ which means that the surface is
totally geodesic.\\

Combining the results proved above we obtain the theorem.
\end{proof}
\begin{remark}
 It is interesting to know that it is shown in \cite{In} that
there exists proper biharmonic Hopf cylinder in a Sasakian
3-manifold of constant holomorphic sectional curvature $c=4m-3>1$
which, according to \cite{Ta}, is isometric to a
Bianchi-Cartan-Vranceanu space with $l=2$, i.e., a $SU(2)$. On the
other hand, using a different method, the authors in \cite{FO} gave
an explicit equation of a Hopf cylinder in Bianchi-Cartan-Vranceanu
space with $l=2$ and $c=4m-3>1$. Our results show that this is the
only proper biharmonic surface in $SU(2)$ with constant mean
curvature. Clearly, the proper biharmonic Hopf cylinder is not
totally umbilical.
\end{remark}
Now we are ready to prove the following theorem which gives a
complete classification a proper biharmonic cylinder in
Bianchi-Cartan-Vranceanu spaces.
\begin{theorem}\label{BCV}
Let $\pi:
M^3_{m,l}=\left(\mathbb{R}^{3},g=\frac{dx^2+dy^2}{[1+m(x^2+y^2)]^2}+[dz+\frac{l}{2}\frac{y
dx-x dy}{1+m(x^2+y^2)}]^2\right)\longrightarrow
(\mathbb{R}^{2},h=\frac{dx^2+dy^2}{[1+m(x^2+y^2)]^2})$,
$\pi(x,y,z)=(x, y)$ be a Riemannian submersion. Let
$\alpha:I\to(\r^2, h=\frac{dx^2+dy^2}{[1+m(x^2+y^2)]^2})$ be
 an immersed regular curve parametrized by arclength. Then the Hopf cylinder
 $\Sigma=\cup_{s\in I}\pi^{-1}(\alpha(s))$ is a proper biharmonic surface in
Bianchi-Cartan-Vranceanu space if and only if it is a
part of: \\
$(1)$ \;\; $S^1((1/(2\sqrt{2m}))\times \mathbb{R}$ in $S^2(1/(2\sqrt{m})\times \mathbb{R}$, or\\
$(2)$\;\; a Hopf cylinder in $SU(2)$ with $4m-l^2>0$ over a circle
of radius $R=1/\sqrt{8m-l^2\:}$ in the base sphere
$S^2(\frac{1}{2\sqrt{m}})$ identified with
$\left(\mathbb{R}^{2},h=\frac{dx^2+dy^2}{[1+m(x^2+y^2)]^2}\right)$.
\end{theorem}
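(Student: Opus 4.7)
The plan is to reduce the classification of proper biharmonic Hopf cylinders to the constant mean curvature case already settled in Theorem \ref{CVV}. That is, I will show that the proper biharmonicity of a Hopf cylinder $\Sigma=\cup_{s\in I}\pi^{-1}(\alpha(s))$ automatically forces $H$ to be constant, after which Theorem \ref{CVV} supplies the list (discarding case (1) there, which is totally umbilical and not of Hopf cylinder type).

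First I would set up the geometry exactly as in Case II-A of the proof of Theorem \ref{CVV}. Since the fibers of $\pi$ are integral curves of $E_3$ and the Hopf cylinder is a union of such fibers, the unit normal $\xi$ is horizontal, so $c^3=0$, and one can write $\xi=bE_1-aE_2$, $e_1=aE_1+bE_2$, $e_2=E_3$ with $a^2+b^2=1$. Using (\ref{cbu1})--(\ref{cbu2}) the shape operator in this adapted frame is $\bigl(\begin{smallmatrix}\kappa_g&l/2\\ l/2&0\end{smallmatrix}\bigr)$, giving
\[
H=\tfrac{1}{2}\kappa_g,\qquad |A|^2=\kappa_g^2+\tfrac{l^2}{2},
\]
and (\ref{cbu3}) shows $a,b$ (hence $\kappa_g$, hence $H$) are constant along the vertical fibers, i.e. $e_2(H)=0$. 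In particular $\mathrm{grad}\,H=e_1(H)\,e_1$.

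Next I would apply the tangential part of the biharmonic equation in Theorem \ref{MTH}. Since $c^3=0$, formula (\ref{hn46}) yields $(\mathrm{Ric}^N(\xi))^{\top}=0$, and with $m=2$ and $\mathrm{grad}\,H^2=2H\,\mathrm{grad}\,H$ the second equation of (\ref{BHEq}) collapses to
\[
A(\mathrm{grad}\,H)+H\,\mathrm{grad}\,H=0.
\]
Substituting $A(e_1)=\kappa_g e_1+(l/2)e_2=2H\,e_1+(l/2)\,e_2$ and $\mathrm{grad}\,H=e_1(H)\,e_1$ this becomes the vector identity
\[
e_1(H)\bigl[\,3H\,e_1+\tfrac{l}{2}\,e_2\,\bigr]=0.
\]
Reading off components: if $l\ne 0$ the $e_2$-component forces $e_1(H)=0$, and together with $e_2(H)=0$ we conclude $H$ is constant on $\Sigma$. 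If $l=0$ the $e_1$-component gives $3H\,e_1(H)=0$; on the open set $U=\{H\ne 0\}$ we again get $e_1(H)=0$, so $H$ is a nonzero constant on $U$, and by continuity together with the connectedness of $\Sigma$ either $U=\Sigma$ or $H\equiv 0$, the latter contradicting proper biharmonicity.

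Having established that a proper biharmonic Hopf cylinder has constant mean curvature, I would invoke Theorem \ref{CVV} directly. Of the three cases listed there, case (1) is a totally umbilical round sphere in $S^3$ whose unit normal has nonzero vertical component, so it is not a Hopf cylinder; cases (2) and (3) are precisely the two families in the statement. Conversely, those two families are verified to be proper biharmonic by Theorem \ref{CVV}, completing the biconditional. The only place where care is required is the edge case $l=0$ in the previous paragraph; the rest is a direct algebraic consequence of the horizontal-normal reduction $c^3=0$ combined with the vertical invariance $e_2(H)=0$, which is what makes the tangential biharmonic equation degenerate into a pointwise constraint on $H$ rather than a genuine ODE along $\alpha$.
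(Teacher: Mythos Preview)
Your argument is correct, and its overall strategy---show that a proper biharmonic Hopf cylinder has constant mean curvature, then invoke Theorem~\ref{CVV} and discard the non-cylindrical case---coincides with the paper's. The execution, however, is different. The paper appeals to an external formula (Equation~(16) in \cite{Ou1}), which packages the biharmonic condition for a Hopf tube directly as an ODE system in $\kappa_g$ and $\tau_g$; substituting $\tau_g=-l/2$ and the Ricci data yields the three equations $\kappa_g''-\kappa_g^3+(4m-l^2)\kappa_g=0$, $3\kappa_g\kappa_g'=0$, $-(l/2)\kappa_g'=0$, whose last two force $\kappa_g$ constant and whose first then pins down $\kappa_g^2=4m-l^2$. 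You instead work entirely inside the paper: you recycle the shape-operator computation from Case~II-A of Theorem~\ref{CVV}, feed it into the tangential part of (\ref{BHEq}) (Theorem~\ref{MTH}), and observe that with $c^3=0$ the Ricci term drops out so the tangential equation degenerates to $e_1(H)\bigl[3H\,e_1+(l/2)\,e_2\bigr]=0$. That gives $H$ constant without touching the normal equation at all, after which Theorem~\ref{CVV} does the rest. Your route is more self-contained (no need for the auxiliary tube equation from \cite{Ou1}); the paper's route makes the value $\kappa_g^2=4m-l^2$ visible one step earlier. One small cosmetic point: in the $l=0$ case you can shorten the open-set/continuity argument by noting $3H\,e_1(H)=\tfrac{3}{2}e_1(H^2)$, so $H^2$ is constant directly.
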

\begin{proof}
 Let $\alpha:I\to(\r^2, h=\frac{dx^2+dy^2}{[1+m(x^2+y^2)]^2})$
 =$S^2(\frac{\sqrt{m}}{2m})$ with $\alpha(s)=(x(s),y(s))$ be
 an immersed regular curve parametrized by arclength with
the geodesic curvature $\kappa_{g}$.  As in \cite{Ou1} we can take
the horizontal lifts of the tangent and the principal normal vectors
of the curve $\alpha$:  $X=\frac{x'}{F}E_1+\frac{y'}{F}E_2$ and
$\xi=\frac{y'}{F}E_1-\frac{x'}{F}E_2$ (where $F=1+m(x^2+y^2)$)
together with $V=E_3$ to be an orthonormal frame adapted to the Hopf
cylinder. A straightforward computation gives:
\begin{equation}\label{Rict11}
\begin{cases}
 {\rm Ric}\, (\xi,\xi)=(4m-\frac{l^2}{2})(\frac{x'^2+y'^2}{F^2})=4m-\frac{l^2}{2},\\
{\rm Ric}\,
(\xi,X)=(4m-\frac{l^2}{2})(\frac{-x'y'+x'y'}{F^2})=0,\\{\rm Ric}\,
(\xi,V)={\rm Ric}\, (\frac{y'}{F}E_1-\frac{x'}{F}E_2,E_3)=0,
\end{cases}
\end{equation}
and the torsion of the lifting curve $\pi^{-1}(\alpha(s))$
\begin{eqnarray}\label{Rict21}
\tau_{g}=-\langle\nabla_{X}V,\xi\rangle=-\langle\nabla_{\frac{x'}{F}E_1+\frac{y'}{F}E_2}E_3,\frac{y'}{F}E_1-\frac{x'}{F}E_2\rangle=-\frac{l}{2},
\end{eqnarray}
Substituting (\ref{Rict11}) and (\ref{Rict21}) into Equation (16) in
\cite{Ou1}, we have
\begin{equation}\label{Rict31}
\begin{cases}
\kappa''_{g}-\kappa_{g}^3+(4m-l^2)\kappa_{g}=0,\\
3\kappa_{g}\kappa'_{g}=0,\\
-\frac{l}{2}\kappa'_{g}=0.
\end{cases}
\end{equation}
Solving Equation (\ref{Rict31}) we have  $\kappa_{g}=0$ which gives
the minimal surface $\Sigma=\cup_{s\in I}\pi^{-1}(\alpha(s))$, or
$\alpha$ has constant geodesic curvature $\kappa^2_{g}=4m-l^2$. It
follows from \cite{Ou1} (page 229) that the mean curvature of the
Hopf cylinder is given by $H=\frac{\kappa_{g}}{2}$ and
$|A|^2=\kappa^2_{g}+2\tau^2_{g}=4m-\frac{l^2}{2}={\rm constant}$.
From these we conclude that the Hopf cylinder $\Sigma=\cup_{s\in
I}\pi^{-1}(\alpha(s))$ is proper biharmonic if only if
\begin{equation}\label{bz}
\begin{cases}
H^2=\frac{4m-l^2}{4} >0,\\
|A|^2=4m-\frac{l^2}{2}>0.
\end{cases}
\end{equation}

It follows from (\ref{bz}) that  $m >0$ and hence the potential
Bianchi-Cartan-Vranceanu space is either $S^2(1/(2\sqrt{m})\times
\mathbb{R}$ or $SU(2)$ with $m>0$. Applying our characterizations of
Hopf cylinders in $S^2(1/(2\sqrt{m})\times \mathbb{R}$ or $SU(2)$
given in Theorem \ref{CVV} we obtain the Theorem.
\end{proof}
{\bf 2.2 Constant mean curvature biharmonic surfaces in Sol space}\\

Let $(\mathbb{R}^{3},g_{Sol})$ denote Sol space, where the metric
can be written as $g_{Sol}=e^{2z}{\rm d}x^{2}+e^{-2z}{\rm
d}y^{2}+{\rm d}z^{2}$ with respect to the standard coordinates
$(x,y,z)$ in $\mathbb{R}^{3}$. One can easily check that an
orthonormal frame on Sol space can be chosen to be:
\begin{equation}\notag
E_{1}=e^{-z}\frac{\partial}{\partial x},\;
E_{2}=e^{z}\frac{\partial}{\partial y},
\;E_{3}=\frac{\partial}{\partial z}.
\end{equation}
With respect to this orthonormal frame, the Lie brackets and the
Levi-Civita connection can be easily computed as:
\begin{equation}\notag
[E_{1},E_{2}]=0, \;[E_{2},E_{3}]=-E_{2}, \;[E_{1},E_{3}]=E_{1},
\end{equation}
\begin{equation}\notag
\begin{array}{lll}
\nabla_{E_{1}}E_{1}=-E_{3},\hskip0.7cm\nabla_{E_{1}}E_{2}=0, \hskip1cm\nabla_{E_{1}}E_{3}=E_{1}\\
\nabla_{E_{2}}E_{1}=0,\hskip1.2cm \nabla_{E_{2}}E_{2}=E_{3},\hskip1cm\nabla_{E_{2}}E_{3}=-E_{2}\\
\nabla_{E_{3}}E_{1}=0,\hskip1.2cm\nabla_{E_{3}}E_{2}=0,\hskip1.2cm\nabla_{E_{3}}E_{3}=0.\\
\end{array}
\end{equation}
A further computation gives
\begin{equation}\notag
\begin{array}{lll}
R(E_1,E_2)E_1=-E_{2},\; R(E_1,E_3)E_1=E_{3},
R(E_1,E_2)E_2=E_{1},\\R(E_2,E_3)E_2=E_{3},
R(E_1,E_3)E_3=-E_{1},R(E_2,E_3)E_3=-E_{2},
\end{array}
\end{equation}
and the possible nonzero components of the Riemannian curvature:
\begin{equation}\notag
\begin{array}{lll}
 R_{1212}=g(R(E_{1},E_{2})E_{2},E_{1})=1,\\
R_{1313}=g(R(E_{1},E_{3})E_{3},E_{1})=-1,\\
R_{2323}=g(R(E_{2},E_{3})E_{3},E_{2})=-1.
\end{array}
\end{equation}
The Ricci curvature has components:
\begin{equation}\label{00E25}
{\rm Ric}(E_3,E_3)=-2,\;\;{\rm Ric}(E_1,E_1)={\rm Ric}(E_2,E_2)=0.
\end{equation}

\begin{proposition}\label{P1}
A constant mean curvature surface in Sol space is biharmonic if
and only if it is minimal.
\end{proposition}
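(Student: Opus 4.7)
The plan is to combine the reduced constant-mean-curvature biharmonic system (\ref{hn45}) with the Ricci data of Sol displayed in (\ref{00E25}) and derive a sign obstruction that forces $H=0$. Writing the unit normal in the adapted orthonormal frame as $\xi = c^1 E_1 + c^2 E_2 + c^3 E_3$ with $(c^1)^2+(c^2)^2+(c^3)^2=1$, the fact that the Ricci operator of Sol is diagonal in $\{E_1, E_2, E_3\}$ with eigenvalues $0, 0, -2$ (the $-2$ eigendirection being $E_3$) yields immediately
\begin{equation*}
{\rm Ric}^N(\xi,\xi) = -2(c^3)^2, \qquad {\rm Ric}^N(\xi) = -2 c^3 E_3.
\end{equation*}

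Assume for contradiction that $H \neq 0$. Then the first equation of (\ref{hn45}) reduces to $|A|^2 = {\rm Ric}^N(\xi,\xi) = -2(c^3)^2$. Since $|A|^2 \ge 0$ while the right-hand side is non-positive, both sides must vanish, so $|A|^2 = 0$ and $c^3 = 0$. But $|A|^2 = 0$ on a surface is equivalent to the shape operator vanishing identically, so the surface is totally geodesic; in particular $H = \frac{1}{2}\,{\rm tr}\,A = 0$, contradicting $H \neq 0$. Hence $H \equiv 0$ and the surface is minimal; the converse (minimal implies biharmonic) is automatic.

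There is no real obstacle in this argument: the negativity of the unique nontrivial Ricci eigenvalue of Sol kills the first equation outright, and the second (tangential) equation of (\ref{hn45}) never needs to be invoked. Morally, Sol's Ricci curvature in the ``bad'' direction $E_3$ is too negative to be balanced by the nonnegative quantity $|A|^2$, and this sign mismatch is precisely what obstructs proper biharmonicity of constant-mean-curvature surfaces in Sol.
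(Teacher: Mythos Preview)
Your argument is correct and follows essentially the same route as the paper: both reduce to the first (normal) equation of (\ref{hn45}), compute ${\rm Ric}^N(\xi,\xi)=-2(c^3)^2$ from (\ref{00E25}), and use the sign obstruction $|A|^2+2(c^3)^2=0$ to force $|A|^2=0$ and hence minimality. Your observation that the tangential equation is never needed is accurate; the paper writes it down but does not actually use it either.
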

\begin{proof}
Let $\{e_1=a^iE_i,\;\; e_2=b^iE_i,\;\; \xi=c^iE_i\}$ be an adapted
orthonormal frame with $\xi$ being normal to the surface. Use the
Ricci curvature (\ref{00E25}) we have ${\rm
Ric}\,(\xi,\xi)=-2(c^3)^2, \;\;({\rm
Ric}\,(\xi))^{\top}=-2c^3a^3E_1-2c^3b^3E_2$.
 From these together with biharmonic surface equation and the assumption that the mean curvature $H$ is constant we conclude that  the
surface is biharmonic if and only if
\begin{equation}\notag
\begin{cases}
-H [|A|^{2}+2(c^3)^2]=0,\\
-2c^3a^3H=0,\\
-2c^3b^3H=0,
\end{cases}
\end{equation}
which has solution $H=0$ meaning that the surface is minimal, or
\begin{equation}\label{CSL}
\begin{cases}
|A|^{2}+2(c^3)^2=0,\\
c^3a^3=0,\\
c^3b^3=0.
\end{cases}
\end{equation}
Solving Equations (\ref{CSL}) we have $c^3=0$ and  $|A|^{2}=0$,
which implies the surface is minimal. Thus, we obtain the
proposition.
\end{proof}

\begin{corollary}
The only constant mean curvature proper biharmonic surafces in
Thurston's 3-dimensional geometries are a part of $S^2(1/\sqrt{2})$
in $S^3$, or a part of $S^1(1/\sqrt{2})\times \mathbb{R}$ in
$S^2\times \mathbb{R}$.
\end{corollary}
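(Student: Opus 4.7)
The plan is to dispatch Thurston's eight geometries one by one by combining the three earlier classification results of this paper. I partition the eight geometries into (a)~the three space forms $\mathbb{R}^3$, $S^3$, $H^3$; (b)~the four non-space-form Bianchi--Cartan--Vranceanu (BCV) geometries $S^2\times\mathbb{R}$, $H^2\times\mathbb{R}$, $\widetilde{SL}(2,\mathbb{R})$, $\mathrm{Nil}$; and (c)~$\mathrm{Sol}$.

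For group~(a) I quote the Jiang / Caddeo--Iucci / Caddeo--Montaldo--Oniciuc classification of biharmonic surfaces in $3$-dimensional space forms \cite{Ji,CI,CMO1}: every biharmonic surface in $\mathbb{R}^3$ or $H^3$ is minimal, and the only CMC proper biharmonic surface in $S^3$ is (an open part of) $S^2(1/\sqrt{2})$. This immediately produces the first entry of the corollary and eliminates $\mathbb{R}^3$ and $H^3$.

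For group~(b) I identify each geometry with its BCV parameter pair in the model~(\ref{CV}): $S^2\times\mathbb{R}$ corresponds to $(m>0,\,l=0)$, $H^2\times\mathbb{R}$ to $(m<0,\,l=0)$, $\mathrm{Nil}$ to $(m=0,\,l\neq 0)$, and $\widetilde{SL}(2,\mathbb{R})$ to $(m<0,\,l\neq 0)$. I then apply Theorem~\ref{CVV} to each. Its conclusion~$(1)$ is the round $S^3$ case $(l^2=4m>0)$, already treated in~(a). Its conclusion~$(3)$ produces Hopf cylinders in Berger-type $SU(2)$'s with $0<l^2<4m$, none of which appear on Thurston's list, so they contribute nothing. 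Its conclusion~$(2)$ requires $l=0$ and $m>0$, matching only $S^2\times\mathbb{R}$; normalizing the base sphere to unit radius gives $m=1/4$ and yields $S^1(1/\sqrt{2})\times\mathbb{R}$ in $S^2\times\mathbb{R}$, the second entry of the corollary. For $H^2\times\mathbb{R}$, $\widetilde{SL}(2,\mathbb{R})$, and $\mathrm{Nil}$ the parameter constraints force $m\leq 0$ and therefore exclude all three conclusions of Theorem~\ref{CVV}, so no CMC proper biharmonic surface exists.

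For group~(c), Proposition~\ref{P1} directly asserts that every CMC biharmonic surface in $\mathrm{Sol}$ is minimal, so $\mathrm{Sol}$ also contributes nothing. Assembling the three groups proves the corollary. The argument is essentially bookkeeping on top of the three earlier classifications; the only mildly delicate step is the parameter matching between the BCV family and the standard Thurston metrics, specifically the observation that conclusion~$(3)$ of Theorem~\ref{CVV} yields Berger-type $3$-spheres lying outside Thurston's list and hence supplying no new examples.
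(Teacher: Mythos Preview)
Your proof is correct and follows exactly the paper's own approach: split the eight geometries into the three space forms (handled by \cite{Ji}, \cite{CI}, \cite{CMO1}), the remaining BCV geometries (handled by Theorem~\ref{CVV}), and $\mathrm{Sol}$ (handled by Proposition~\ref{P1}), then read off the survivors. Your explicit parameter bookkeeping for the BCV family---in particular the observation that conclusion~(3) of Theorem~\ref{CVV} lands in non-round Berger spheres outside Thurston's list---merely spells out what the paper leaves implicit; note only that the reference \cite{CI} is Chen--Ishikawa, not ``Caddeo--Iucci''.
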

\begin{proof}
By the classification results of \cite{Ji}, \cite{CI} and
\cite{CMO2}, the only proper biharmonic surface in space forms
$\r^3, H^3$ and $S^3$ is a part of $S^2(1/\sqrt{2})$ in $S^3$. It
follows from Theorem \ref{CVV} and Proposition \ref{P1} that the
only constant mean curvature proper biharmonic surface in $S^2\times
\r,\; H^2\times \r$, $\widetilde{SL}(2,\r)$, ${\rm Nil}$, \; and
$\rm Sol$ spaces is a part of $S^1(1/\sqrt{2})\times \mathbb{R}$ in
$S^2\times \mathbb{R}$. Combining these we obtain the corollary.
\end{proof}

\section{Totally umbilical biharmonic surfaces in 3-dimensional
geometries}

In this section, we first prove that a totally umbilical biharmonic
surface in any $3$-dimensional Riemannian manifold must have
constant mean curvature. We then use this theorem to show that the
only totally umbilical proper biharmonic surface in 3-dimensional
geometries is a part of $S^2(1/\sqrt{2})$ in $S^3$.\\

\begin{theorem}\label{MT44}
A totally umbilical  biharmonic surface  in $3$-dimensional
Riemannian manifolds must have constant mean curvature.
\end{theorem}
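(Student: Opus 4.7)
My plan is to exploit the second equation in Theorem \ref{MTH} together with the Codazzi--Mainardi equation, both of which simplify dramatically in the totally umbilical, codimension-one, surface setting.

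The setup is as follows. Take a local orthonormal frame $\{e_1,e_2\}$ on the surface $M^2\subset N^3$ with unit normal $\xi$. Total umbilicity means $A=H\,\mathrm{Id}$, so $|A|^2=2H^2$ and $A(\mathrm{grad}\,H)=H\,\mathrm{grad}\,H$. With $m=2$, the second line of \eqref{BHEq} becomes
\begin{equation}\notag
2H\,\mathrm{grad}\,H+\mathrm{grad}\,H^2-2H\bigl(\mathrm{Ric}^N(\xi)\bigr)^{\top}=0,
\end{equation}
which, using $\mathrm{grad}\,H^2=2H\,\mathrm{grad}\,H$, reduces to the identity
\begin{equation}\notag
H\bigl[2\,\mathrm{grad}\,H-\bigl(\mathrm{Ric}^N(\xi)\bigr)^{\top}\bigr]=0.
\end{equation}
So the first key step is to show that on a totally umbilical surface in any $3$-manifold the tangential Ricci vector is determined by $H$; specifically, I aim to prove
\begin{equation}\notag
\bigl(\mathrm{Ric}^N(\xi)\bigr)^{\top}=-\,\mathrm{grad}\,H.
\end{equation}

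The second step is the proof of that identity, which is where the dimensional hypothesis is used. I will feed $A=H\,\mathrm{Id}$ into the Codazzi--Mainardi equation $(\nabla_X A)Y-(\nabla_Y A)X=(R^N(X,Y)\xi)^{\top}$ to obtain $X(H)Y-Y(H)X=(R^N(X,Y)\xi)^{\top}$. Setting $(X,Y)=(e_1,e_2)$ and pairing with $e_1$ and $e_2$ separately yields
\begin{equation}\notag
\langle R^N(e_1,e_2)\xi,e_1\rangle=-e_2(H),\qquad \langle R^N(e_1,e_2)\xi,e_2\rangle=e_1(H).
\end{equation}
Because $\dim N=3$, the Ricci components $\mathrm{Ric}^N(\xi,e_i)$ collapse to a single sectional-type term: $\mathrm{Ric}^N(\xi,e_1)=\langle R^N(\xi,e_2)e_2,e_1\rangle$ and $\mathrm{Ric}^N(\xi,e_2)=\langle R^N(\xi,e_1)e_1,e_2\rangle$. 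Applying the pair-symmetry of the curvature tensor turns these into the two Codazzi expressions above, giving $\mathrm{Ric}^N(\xi,e_1)=-e_1(H)$ and $\mathrm{Ric}^N(\xi,e_2)=-e_2(H)$, which is exactly $(\mathrm{Ric}^N(\xi))^{\top}=-\,\mathrm{grad}\,H$.

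Combining the two steps gives $H\bigl[2\,\mathrm{grad}\,H+\mathrm{grad}\,H\bigr]=0$, i.e.\ $3H\,\mathrm{grad}\,H=0$, equivalently $\mathrm{grad}\,H^2=0$. Hence $H^2$ is locally constant on $M$, and since $H$ is smooth on a connected surface it must itself be constant (a continuous function on a connected space taking only the values $\pm c$ is constant). The only nonroutine point is the Codazzi/Ricci identification $(\mathrm{Ric}^N(\xi))^{\top}=-\mathrm{grad}\,H$; once that is in hand, everything else is bookkeeping inside equation \eqref{BHEq}. I expect no essential obstacle beyond getting the curvature-tensor sign conventions of the paper (where $R(X,Y,Z,W)=g(R(Z,W)Y,X)$) aligned consistently with the Codazzi formula.
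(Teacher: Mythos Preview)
Your approach is essentially identical to the paper's: both use the Codazzi equation on a totally umbilical surface in a $3$-manifold to identify $(\mathrm{Ric}^N(\xi))^{\top}$ with $\pm\,\mathrm{grad}\,H$, and then substitute into the tangential biharmonic equation \eqref{BHEq} to force $H\,\mathrm{grad}\,H=0$. One caveat on signs: with the paper's curvature convention the Codazzi equation reads $(\nabla_X A)Y-(\nabla_Y A)X=-(R^N(X,Y)\xi)^{\top}$, so the correct identity is $(\mathrm{Ric}^N(\xi))^{\top}=+\,\mathrm{grad}\,H$ (giving $H\,\mathrm{grad}\,H=0$ as in the paper rather than your $3H\,\mathrm{grad}\,H=0$), but this does not affect the conclusion.
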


\begin{proof}
Take an orthonormal frame $\{e_1=a^iE_i, e_2=b^iE_i,\xi=c^iE_i\}$ of
$3$-dimensional Riemannian manifold adapted to the surface $M$ such
that $A e_i=\lambda_ie_i$, where $A$ is the Weingarten map of the
surface and $\lambda_i$ is the principal curvature in the direction
$e_i$. Since $M$ is supposed to be totally umbilical, i.e., all
principal normal curvatures  at any point of $M$ are equal to the
same number $\lambda$. It follows that
\begin{eqnarray}\label{Ubii}
H=\frac{1}{2}\sum_{i=1}^2\langle A e_i,e_i\rangle=\lambda,\\\notag A
({\rm grad} H)=A(\sum_{i=1}^2(e_i \lambda)e_i)=\frac{1}{2}{\rm
grad}\, \lambda^2,\\\notag |A|^2=2 \lambda^2.
\end{eqnarray}

On the other hand, a straightforward computation  gives
\begin{eqnarray}\label{Cod11}
\langle R(e_1,e_2)e_1,\xi\rangle= R(\xi, e_1,e_1, e_2) =-{\rm
Ric}(e_2,\xi),\\\label{Cod22} \langle R(e_1,e_2)e_2,\xi\rangle
=R(\xi, e_2,e_1, e_2) ={\rm Ric}(e_1,\xi) .
\end{eqnarray}

Noting that $e_1, e_2$ are principal directions with principal
curvature $\lambda$ we can check that
\begin{eqnarray}\label{Cod33}
(\nabla_{e_1}h)(e_2,e_1)&=&{e_1}(h(e_2,e_1))-h(\nabla_{e_1}e_2,e_1)-h(\nabla_{e_1}e_1,e_2)\\\notag
&=&-h(e_1,e_1)\langle\nabla_{e_1}e_2,e_1\rangle-h(e_2,e_2)\langle\nabla_{e_1}e_1,e_2\rangle\\\notag
&=&-\lambda(\langle\nabla_{e_1}e_2,e_1\rangle+\langle\nabla_{e_1}e_1,e_2\rangle)=0,
\end{eqnarray}

\begin{equation}\label{Cod44}
\begin{array}{lll}
(\nabla_{e_2}h)(e_1,e_1)={e_2}(h(e_1,e_1))-h(\nabla_{e_2}e_1,e_1)-h(\nabla_{e_2}e_1,e_1)
=e_2(\lambda),
\end{array}
\end{equation}
\begin{equation}\label{Cod55}
\begin{array}{lll}
(\nabla_{e_1}h)(e_2,e_2)={e_1}(h(e_2,e_2))-h(\nabla_{e_1}e_2,e_2)-h(\nabla_{e_1}e_2,e_2)
=e_1(\lambda),
\end{array}
\end{equation}
and
\begin{equation}\label{Cod66}
\begin{array}{lll}
(\nabla_{e_2}h)(e_1,e_2)={e_2}(h(e_1,e_2))-h(\nabla_{e_2}e_1,e_2)-h(\nabla_{e_2}e_2,e_1)
=0.
\end{array}
\end{equation}
Using (\ref{Cod11}), (\ref{Cod22}), (\ref{Cod33}), (\ref{Cod44}),
(\ref{Cod55}), (\ref{Cod66}) and  the Codazzi equation for a
hypersurface:
\begin{eqnarray}\notag
&&(\nabla_{X}h)(Y,Z)-(\nabla_{Y}h)(X,Z)=(R^N(X,Y)Z)^{\bot} =\langle
R^N(X,Y)Z, \xi\rangle,
\end{eqnarray}
where the covariant derivative of the second fundamental form $h$ is
defined by
$$(\nabla h)(X, Y, Z) = X(h(Y, Z)) - h(\nabla_X Y, Z) - h(Y,\nabla_X
Z),$$
 we have
\begin{equation}\label{Cod77}
\begin{cases}
e_1(\lambda)={\rm Ric}(e_1,\xi),\\
e_2(\lambda)={\rm Ric}(e_2,\xi).
\end{cases}
\end{equation}

On the other hand, using (\ref{Ubii}) and the second equation of
(\ref{BHEq}) we have
\begin{equation}\label{chang}
 2 \lambda\,{\rm grad}\, \lambda
-\, \lambda \,({\rm Ric}\,(\xi, e_1)e_1+{\rm Ric}\,(\xi, e_2)e_2)=0.
\end{equation}
Substituting Equation (\ref{Cod77}) into (\ref{chang}) we have
\begin{equation}\notag
 \lambda\,{\rm grad}\, \lambda
=0,
\end{equation}
from which we conclude that $\lambda$ is a constant. Thus, we obtain
the theorem.
\end{proof}

Note that to classify totally umbilical biharmonic surfaces in
3-dimensional geometries it is enough to know totally umbilical
biharmonic surfaces in Bianchi-Cartan-Vranceanu 3-spaces and in the
hyperbolic 3-space and $\rm Sol$ space. As biharmonic surfaces in
3-dimensional space forms have been classified we need only to
classify totally umbilical biharmonic surfaces in $3$-dimensional
Bianchi-Cartan-Vranceanu spaces and in $\rm Sol$ space. This is done
by the following two corollaries.\\

\begin{corollary}\label{MT4}
A totally umbilical  biharmonic surface  in Sol space is
biharmonic if and only if it is minimal.
\end{corollary}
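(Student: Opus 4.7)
The plan is to reduce this to a direct combination of two results already established in the paper: Theorem \ref{MT44} (every totally umbilical biharmonic surface in a 3-manifold has constant mean curvature) and Proposition \ref{P1} (every constant mean curvature biharmonic surface in Sol is minimal). No new computation should be needed.

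First I would dispose of the easy direction: if the surface is minimal, the defining isometric immersion is harmonic, hence automatically biharmonic, so this implication is trivial and needs only a one-line remark.

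For the non-trivial direction, I would argue as follows. Assume $\Sigma \subset (\r^3, g_{\rm Sol})$ is totally umbilical and biharmonic. By Theorem \ref{MT44}, the mean curvature $H$ is constant on $\Sigma$. Now $\Sigma$ is a constant mean curvature biharmonic surface in Sol space, so Proposition \ref{P1} applies directly and forces $H=0$, i.e.\ $\Sigma$ is minimal. The two implications together give the corollary.

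There is no real obstacle here; the content of the corollary is entirely carried by Theorem \ref{MT44} and Proposition \ref{P1}. The only thing worth being careful about is the logical reading of the statement — ``totally umbilical biharmonic $\Leftrightarrow$ minimal'' should be interpreted as: among totally umbilical surfaces in Sol, biharmonicity is equivalent to minimality. Phrased this way the proof is a two-step citation, and I would write it as such rather than redoing any of the frame or curvature computations from Section 2.2.
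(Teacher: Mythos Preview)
Your proposal is correct and matches the paper's own proof exactly: the paper simply states that the corollary is a consequence of Theorem \ref{MT44} and Proposition \ref{P1}. Your additional remark on the trivial direction (minimal $\Rightarrow$ biharmonic) is fine and does not depart from the paper's approach.
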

\begin{proof}
This is a consequence of Theorem \ref{MT44} and Proposition
\ref{P1}.
\end{proof}

\begin{remark}
Note that there are many totally umbilical surfaces in Sol space
(see \cite{ST} for classifications of totally umbilical surfaces in
Sol space and in a more general homogeneous $3$-manifold).
\end{remark}

\begin{corollary}\label{MT5}
A totally umbilical surface in a $3$-dimensional
Bianchi-Cartan-Vranceanu space is proper biharmonic if and only if
it is  part of $S^2(1/\sqrt{2m})$ in $S^3(1/\sqrt{m})$.
\end{corollary}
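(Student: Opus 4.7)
The plan is to combine Theorem \ref{MT44} with the classification in Theorem \ref{CVV}. Since a totally umbilical biharmonic surface in any 3-manifold has constant mean curvature by Theorem \ref{MT44}, any totally umbilical proper biharmonic surface in a Bianchi-Cartan-Vranceanu space is automatically a constant mean curvature proper biharmonic surface, so it must be one of the three model surfaces listed in Theorem \ref{CVV}: (1) the small sphere $S^{2}(1/\sqrt{2m})$ in $S^{3}(1/\sqrt{m})$, (2) the cylinder $S^{1}(1/(2\sqrt{2m}))\times\mathbb{R}$ in $S^{2}(1/(2\sqrt{m}))\times\mathbb{R}$, or (3) a Hopf cylinder in $SU(2)$ over a small circle in $S^{2}(1/(2\sqrt{m}))$.

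Next I would eliminate cases (2) and (3) by checking that no proper Hopf cylinder produced in the proof of Theorem \ref{CVV} can be totally umbilical. This is essentially already on the page: in Case II-A of that proof the second fundamental form with respect to the adapted frame $\{e_{1},e_{2}=E_{3}\}$ has components $h(e_{1},e_{1})=\kappa_{g}$, $h(e_{1},e_{2})=h(e_{2},e_{1})=l/2$, and $h(e_{2},e_{2})=0$ (see equation (\ref{cbu2})). Total umbilicity would force the shape operator to be a scalar multiple of the identity, hence $h(e_{2},e_{2})=h(e_{1},e_{1})$ and $h(e_{1},e_{2})=0$; the first condition forces $\kappa_{g}=0$. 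But for both cases (2) and (3) we have $\kappa_{g}^{2}=4m-l^{2}>0$, a contradiction. So these Hopf cylinders are never totally umbilical.

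Conversely, case (1) does occur: the geodesic sphere $S^{2}(1/\sqrt{2m})\subset S^{3}(1/\sqrt{m})$ is a standard example of a totally umbilical surface in a space form and is already known to be proper biharmonic. Combining the two directions gives the corollary.

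The only potential obstacle is making sure the non-umbilicity argument in cases (2)–(3) uses the correct umbilicity criterion at a point where the adapted frame is not necessarily an eigenframe of $A$: one must check that total umbilicity is equivalent to $A=\lambda\,\mathrm{Id}$ (which is frame-independent), and that this cannot hold when the $(2,2)$-entry of $h$ vanishes while the $(1,1)$-entry does not. This is immediate, so no serious computational work beyond what Theorem \ref{CVV} already provides is required.
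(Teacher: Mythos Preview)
Your proof is correct and follows essentially the same approach as the paper: reduce to the constant mean curvature case via Theorem~\ref{MT44}, invoke the classification in Theorem~\ref{CVV}, and then rule out the Hopf cylinders as non-umbilical. The only difference is that you spell out the non-umbilicity of the Hopf cylinders using the explicit second fundamental form from (\ref{cbu2}), whereas the paper simply asserts this as ``clearly not totally umbilical.''
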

\begin{proof}
By Theorem \ref{MT44}, a totally umbilical biharmonic surface in a
$3$-dimensional Bianchi-Cartan-Vranceanu space has constant mean
curvature. This, together with Theorem \ref{CVV}, implies that the
only potential totally umbilical proper biharmonic surface in these
spaces are a part of $S^2(1/\sqrt{2m})$ in $S^3(1/\sqrt{m})$, or a
part of a Hopf cylinder. As the latter surface is clearly not
totally umbilical we conclude.
\end{proof}
\begin{rm}
We remark that totally umbilical surfaces in
Bianchi-Cartan-Vranceanu spaces with four-dimensional isometry group
has been classified in \cite{Ve} whilst a classification of such
surfaces in other three-dimensional homogeneous spaces has not yet
appeared in the literatures (see \cite{Ve}).
\end{rm}

Now we can summarize our classification of totally umbilical
biharmonic surfaces in Thurston's 3-dimensional geometries in the
following

\begin{theorem}
A totally umbilical surface in 3-dimensional geometries is proper
biharmonic if and only if it is a part of  $S^2(1/\sqrt{2})$ in
$S^3$.
\end{theorem}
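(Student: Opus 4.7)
The plan is to split Thurston's eight model geometries into the three classes already handled in the paper and then combine the conclusions. Namely, among the eight geometries, six of them ($\r^3$, $S^3$, $S^2\times\r$, $H^2\times\r$, $\widetilde{SL}(2,\r)$, and $\mathrm{Nil}$) appear as Bianchi-Cartan-Vranceanu spaces $M^3_{m,l}$ for appropriate choices of $m$ and $l$, while the remaining two are $H^3$ and $\mathrm{Sol}$. My strategy is to dispatch each of these three groups separately using results already proved earlier in the paper.

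First, I would apply Theorem \ref{MT44}: any totally umbilical biharmonic surface in a 3-dimensional Riemannian manifold has constant mean curvature. This lets me, in every one of the three cases below, restrict attention to the \emph{constant mean curvature} classification and then check, at the end, which of the surfaces in that classification are actually totally umbilical.

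Next, for the BCV group I would quote Corollary \ref{MT5}, which says that the only totally umbilical proper biharmonic surface in a 3-dimensional BCV space $M^3_{m,l}$ is a part of $S^2(1/\sqrt{2m})$ sitting in $S^3(1/\sqrt{m})$. In Thurston's list the round $S^3$ corresponds to the choice $m=1/4$, $l=1$ (so that the ambient sphere $S^3(1/\sqrt{m})=S^3(2)$ is normalized to the unit $S^3$ in the standard Thurston presentation, with the totally umbilical biharmonic leaf being $S^2(1/\sqrt{2})$). For the $\mathrm{Sol}$ geometry, Corollary \ref{MT4} already shows that a totally umbilical biharmonic surface must be minimal, so $\mathrm{Sol}$ contributes no proper biharmonic examples. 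It remains only to rule out $H^3$: here I would invoke the known classification of biharmonic surfaces in 3-dimensional space forms (\cite{Ji}, \cite{CI}, \cite{CMO1}), according to which $H^3$ contains no proper biharmonic surface at all (in particular no totally umbilical one), while the only proper biharmonic surface in $\r^3\cup S^3$ is a part of $S^2(1/\sqrt{2})\subset S^3$.

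Combining the three observations, the list of totally umbilical proper biharmonic surfaces across all eight Thurston geometries reduces to a single representative, the small sphere $S^2(1/\sqrt{2})\subset S^3$; conversely this surface is easily checked (or already known) to be proper biharmonic. I do not anticipate a genuine obstacle, since the statement is essentially a bookkeeping consolidation of Theorem \ref{MT44}, Corollary \ref{MT4}, Corollary \ref{MT5}, and the space-form classification; the only point one must be careful about is the normalization of constants when identifying the standard unit $S^3$ in Thurston's list with the BCV model $M^3_{m,l}$ for $m=1/4$, $l=1$, so that $S^2(1/\sqrt{2m})$ becomes $S^2(1/\sqrt{2})$ in the round $S^3$.
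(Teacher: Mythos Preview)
Your proposal is correct and follows essentially the same route as the paper's own proof: invoke Corollaries \ref{MT4} and \ref{MT5} together with the known space-form classification (\cite{Ji}, \cite{CI}, \cite{CMO1}, \cite{CMO2}) to cover all eight geometries. One minor slip: the unit $S^3$ in Thurston's list corresponds to the BCV parameters $m=1$, $l=2$ (not $m=1/4$, $l=1$, which gives $S^3(2)$), though the paper sidesteps this normalization issue entirely by citing the space-form result for $\r^3$, $H^3$, and $S^3$ directly rather than routing $S^3$ through Corollary \ref{MT5}.
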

\begin{proof}
Recall that the eight 3-dimensional geometries are: $ \r^3, \; S^3,
\;H^3$,\; $S^2\times \r,\; H^2\times \r$,\; ${\rm Sol}$, \;${\rm
Nil}$,\; and $\widetilde{SL}(2,\r)$. It is well known (see
\cite{Ji}, \cite{CI}, and \cite{CMO2}) that there is no proper
biharmonic surface in $ \r^3, \; H^3$ and that (see \cite{CMO1}) the
only proper biharmonic surface in $S^3$ is a (part of) sphere
$S^2(1/\sqrt{2})$. These, together with Corollaries \ref{MT4} and
\ref{MT5}, give the complete classification.
\end{proof}

\begin{ack}
We would like to thank C. Oniciuc for some invaluable discussions
and email communications related to this work. Especially we are
most grateful to him for his comments and suggestions that help to
improve our classification of the CMC proper biharmonic surface in
$SU(2)$. We also want to thank J. Inoguchi for some useful comments
that help to improve the original manuscript.
\end{ack}

\end{document}